\documentclass{article} 
\usepackage{amsmath,amsthm}     
\usepackage{graphicx}     
\usepackage{hyperref} 
\usepackage{url}
\usepackage{amsfonts} 
\usepackage{multicol}
\usepackage[utf8]{inputenc}

\newtheorem{theorem}{Theorem}
\newtheorem{lemma}{Lemma}

\theoremstyle{definition}
\newtheorem*{definition}{Definition}

\begin{document}

\title{An asymptotic Formula for the iterated exponential Bell Numbers}

\author{Ivar Henning Skau\\               
University of South-Eastern Norway\\    
3800 B{\o}, Telemark\\              
ivar.skau@usn.no
\and
Kai Forsberg Kristensen\\            
University of South-Eastern Norway\\    
3918 Porsgrunn, Telemark\\          
kai.f.kristensen@usn.no}                   

\maketitle

\begin{abstract}
In 1938 E. T. Bell introduced "The Iterated Exponential Integers". He proved that these numbers may be expressed by polynomials with rational coefficients. However, Bell gave no formulas for any of the coefficients except the trivial one, which is always 1. Our task has been to find the coefficient of the leading term, giving asymptotic information about these numbers.   
\end{abstract}

\section{Higher order Bell numbers}
The iterated exponential numbers, also called higher order Bell numbers, were introduced by E. T. Bell in \cite{B38}: 

\begin{definition}
The $m$-th order Bell numbers, $B_n^{(m)}\quad (m,n=0,1,\ldots)$, are given by the exponential generating functions 
$$E_m(x)=\sum\limits_{n=0}^\infty B_n^{(m)}\frac{x^n}{n!}\quad (m\geq 0),$$
where $E_0(x)=\exp(x)$ and $E_{m+1}(x)=\exp(E_m(x)-1)\quad (m\geq 0)$.
\end{definition}

\noindent Obviously we have $B_n^{(0)}=1$ and $B_0^{(m)}=E_m(0)=1$. In Table \ref{tab:bell} $B_n^{(m)}$ is computed for a few values of $m$ and $n$.\\

\begin{table}[htbp]
\begin{center}
\begin{tabular}{c||c|c|c|c|c|c|c|c}
$m\backslash n$& 1&2 &3&4&5&6&7&8\\
\hline
\hline
1&1& 2& 5& 15& 52& 203& 877& 4140\\
2&1& 3& 12& 60& 358& 2471& 19302& 167894\\
3&1& 4& 22& 154& 1304& 12915& 146115& 1855570\\
4&1& 5& 35& 315& 3455& 44590& 660665& 11035095\\
5&1& 6& 51& 561& 7556& 120196& 2201856& 45592666
\end{tabular}
\end{center}
\caption{Higher order Bell numbers $B_n^{(m)}$ when $1\leq n\leq 8$ and $1\leq m\leq 5$}
\label{tab:bell}
\end{table}

\noindent We note that $E_1(x)=\exp(\exp(x)-1)$ is the exponential generating function of the first order Bell numbers $B_n^{(1)}$, representing the total number $B_n$ of partitions of an $n$-set (see for example \cite[p. 24]{W06}). We also point out that the Stirling numbers of the second kind, $S(n,k)$, represent the number of $k$-partitions of an $n$-set, so that

\begin{equation}
	B_n=\sum_{k=1}^n S(n,k).
		\label{eq:bellrelation1}
\end{equation}

\noindent In \cite[p. 544]{B38} E. T. Bell proved a generalization of \eqref{eq:bellrelation1} that connects higher order Bell numbers to Stirling numbers of the second kind by the following recursion relation: 

\begin{equation}
	B_n^{(m)}=\sum_{k=1}^nB_k^{(m-1)}S(n,k). 
	\label{eq:bellrelation2}
\end{equation}
 
\noindent We will find \eqref{eq:bellrelation2} useful in the following section. 

\section{Polynomial expansions}

The introduction of higher order Bell numbers does not immediately suggest the existence of a polynomial representation. Still, that was exactly what Bell was able to prove in \cite[p.545]{B38}. Based on \eqref{eq:bellrelation2}, we will carry out a somewhat simpler proof, containing a few useful details: 

\begin{lemma}
$B_n^{(m)}$ may be expressed in the form 
\begin{equation}
B_n^{(m)}=c_{n-1}m^{n-1}+c_{n-2}m^{n-2}+\cdots+c_0,
\label{eq:bell_lin_comb}	
\end{equation}
where $c_{n-1},\ldots,c_0$ are rational numbers, independent of $m$. 
\end{lemma}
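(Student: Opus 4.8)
The plan is to induct on $n$, treating \eqref{eq:bellrelation2} as a first-order recurrence in the variable $m$. For the base case $n=1$ one checks that $B_1^{(m)}=1$ for every $m$, which is a constant (degree $0$) polynomial with rational coefficient, matching \eqref{eq:bell_lin_comb}.

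For the inductive step, I would assume the claim holds for every index smaller than $n$. Since $S(n,n)=1$, the natural move is to split off the top term of \eqref{eq:bellrelation2} to obtain the telescoping relation $B_n^{(m)}-B_n^{(m-1)}=\sum_{k=1}^{n-1}S(n,k)\,B_k^{(m-1)}$. By the induction hypothesis each $B_k^{(m-1)}$ with $k\le n-1$ is a polynomial in $m$ of degree $k-1$ with rational coefficients, so the right-hand side is some polynomial $P(m)$ with rational coefficients of degree exactly $n-2$, the leading contribution coming from the $k=n-1$ summand, where $S(n,n-1)=\binom{n}{2}\neq 0$.

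It then remains to pass from the difference $B_n^{(m)}-B_n^{(m-1)}=P(m)$ back to $B_n^{(m)}$ itself. Using $B_n^{(0)}=1$ and summing the relation, I get $B_n^{(m)}=1+\sum_{j=1}^{m}P(j)$. The key fact I would invoke here is that $\sum_{j=1}^{m}j^{p}$ is a polynomial in $m$ of degree $p+1$ with rational coefficients and leading coefficient $1/(p+1)$ (the Faulhaber/Bernoulli power-sum formulas). Applying this term by term to $P$ shows that $\sum_{j=1}^m P(j)$, and hence $B_n^{(m)}$, is a polynomial in $m$ of degree $(n-2)+1=n-1$ with rational coefficients, which completes the induction.

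I expect the main obstacle to be the bookkeeping in this last step: one must verify both that the summation raises the degree by exactly one, so that the leading power $m^{n-1}$ genuinely survives rather than cancelling, and that rationality is preserved throughout. Everything downstream rests on the power-sum formula, so that is really the crux; the recurrence manipulation itself is routine once the $k=n$ term has been peeled off.
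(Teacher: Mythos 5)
Your proposal is correct and follows essentially the same route as the paper: induction on $n$, peeling off the $k=n$ term of \eqref{eq:bellrelation2} (using $S(n,n)=1$) to get the telescoping difference $B_n^{(m)}-B_n^{(m-1)}$, and then invoking the Bernoulli/Faulhaber power-sum formula to see that summing raises the degree by exactly one while preserving rationality. The only addition is that you explicitly track that the degree is exactly $n-1$ via $S(n,n-1)=\binom{n}{2}\neq 0$, a point the paper only exploits later in the proof of Theorem \ref{theorem:asympt_m}.
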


\begin{proof}
Our proof is by induction. First we observe that $B_1^{(m)}=E_m'(0)=\prod_{k=0}^mE_k(0)=1$. Together with $B_n^{(0)}=1$, this means that we must have $c_0=1$ if \eqref{eq:bell_lin_comb} is to be correct. Now, suppose that \eqref{eq:bell_lin_comb} is true for $k<n$, so that $B_k^{(m)}$ is a polynomial of degree $k-1$ with rational coefficients and constant term equal to 1. By rewriting \eqref{eq:bellrelation2}, we have 

\begin{equation}
	\begin{array}{lll}B_n^{(m)}-B_n^{(m-1)}&=&\displaystyle\sum\limits_{k=1}^{n-1}B_k^{(m-1)}S(n,k)\\[0.4cm]
	
	{}&=&d_{n-2}m^{n-2}+d_{n-3}m^{n-3}+\cdots+d_1m+d_0,\end{array}
	\label{eq:bell_induction}
\end{equation}

\noindent where the $d_i$'s are rational coefficients. Summing over $m$, the telescoping property gives

\begin{equation}
\sum\limits_{k=1}^m(B_n^{(k)}-B_n^{(k-1)})=B_n^{(m)}-1=\sum\limits_{r=0}^{n-2}d_r\left(\sum\limits_{k=1}^m k^r\right).	
\label{eq:telescoping}
\end{equation}

\noindent It is well known (see \cite[p. 525]{K90}) that  

\begin{equation}
	\sum\limits_{k=1}^m k^r=\frac{1}{r+1}m^{r+1}+m\sum\limits_{k=2}^{2\lfloor r/2\rfloor}\frac{b_k}{k}\binom{r}{k-1}m^{r-k}, 
	\label{eq:powers_of_integers}
\end{equation}

\noindent where the $b_k$'s are the (rational) Bernoulli numbers. This completes the induction step and proves the lemma.
\end{proof}

\section{An asymptotic formula of the higher order Bell numbers}

When $n$ is fixed, it is straightforward to see that $\lim_{m\rightarrow\infty}B_n^{(m)}/B_n^{(m-1)}=1$. However, looking more carefully into the details of the proof of the lemma, we can sharpen the asymptotics:

\begin{theorem}
Assume $n$ fixed. Then we have 
\begin{equation}
	B_n^{(m)}\sim \frac{n!}{2^{n-1}}m^{n-1}\quad (m\rightarrow\infty).
	\label{eq:bell_m_asymptotic}
\end{equation}
\label{theorem:asympt_m}
\end{theorem}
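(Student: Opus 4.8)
The plan is to reduce the asymptotic statement to the computation of a single number. By the Lemma, $B_n^{(m)}$ is a polynomial of degree $n-1$ in $m$, say with leading coefficient $a_n:=c_{n-1}$ (which depends on $n$ but not on $m$). Then $B_n^{(m)}\sim a_n m^{n-1}$ holds automatically, so proving \eqref{eq:bell_m_asymptotic} amounts to showing $a_n=n!/2^{n-1}$. I would obtain this by extracting a recurrence for $a_n$ directly from the two displayed identities that appear in the proof of the Lemma.

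First I would track the top-degree term in \eqref{eq:bell_induction}. Since $B_k^{(m-1)}$ has degree $k-1$, the highest power of $m$ on the right-hand side comes solely from the index $k=n-1$, namely the term $B_{n-1}^{(m-1)}S(n,n-1)$. Replacing $m-1$ by $m$ does not alter a leading coefficient, so this term behaves like $a_{n-1}S(n,n-1)\,m^{n-2}$; using the standard value $S(n,n-1)=\binom{n}{2}$, the leading coefficient $d_{n-2}$ in \eqref{eq:bell_induction} equals $a_{n-1}\binom{n}{2}$.

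Next I would feed this into \eqref{eq:telescoping}. The highest power of $m$ in $B_n^{(m)}-1$ arises only from the $r=n-2$ summand, and by \eqref{eq:powers_of_integers} the power sum $\sum_{k=1}^m k^{n-2}$ has leading term $\frac{1}{n-1}m^{n-1}$. Hence $a_n=d_{n-2}/(n-1)=a_{n-1}\binom{n}{2}/(n-1)=\frac{n}{2}a_{n-1}$. With the base case $a_1=1$ (since $B_1^{(m)}=1$), iterating the recurrence $a_n=\frac{n}{2}a_{n-1}$ yields $a_n=n!/2^{n-1}$, which is exactly the claimed leading coefficient.

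The only delicate point, rather than a genuine obstacle, is the bookkeeping of degrees: I must verify that no index other than $k=n-1$ in \eqref{eq:bell_induction}, and no index other than $r=n-2$ in \eqref{eq:telescoping}, can contribute to the degree-$(n-1)$ coefficient of $B_n^{(m)}$, and that the shift $m\mapsto m-1$ is harmless at top order. Both facts are immediate once the degrees are compared, so the whole argument is essentially a careful reading of the Lemma's proof restricted to leading terms, together with the closed form $S(n,n-1)=\binom{n}{2}$.
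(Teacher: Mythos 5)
Your proposal is correct and follows essentially the same route as the paper: extract the recurrence $c_{n-1}^{(n)}=\frac{n}{2}c_{n-2}^{(n-1)}$ by tracking leading coefficients through \eqref{eq:bell_induction}, \eqref{eq:telescoping} and \eqref{eq:powers_of_integers}, then iterate from $c_0^{(1)}=1$. If anything, your version is slightly cleaner, since you correctly write the factor as $\binom{n}{2}/(n-1)=n/2$, whereas the paper abbreviates this to the (literally incorrect) identity $S(n,n-1)=\binom{n}{2}=n/2$.
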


\begin{proof}
Since we already have the polynomial expansion from the lemma, it remains to prove that $c_{n-1}=n!/2^{n-1}$. To avoid ambiguity we will use the notation $c_{n-1}=c_{n-1}^{(n)}$ for the leading coefficient of $B_n^{(m)}$. By \eqref{eq:telescoping} and \eqref{eq:powers_of_integers} $c_{n-1}^{(n)}$ can be expressed as $d_{n-2}/(n-1)$. At the same time, because of \eqref{eq:bell_induction}, $d_{n-2}$ must be the $m^{n-2}$-coefficient of $B_{n-1}^{(m-1)}S(n,n-1)$, namely $c_{n-2}^{(n-1)}S(n,n-1)$. Since $S(n,n-1)=\binom{n}{2}=n/2$, this means that 
$$c_{n-1}^{(n)}=\frac{n}{2}c_{n-2}^{(n-1)}.$$
The initial value of this recurrence relation is $c_0^{(1)}=B_1^{(m)}=1$, which enables us to conclude that $c_{n-1}=c_{n-1}^{(n)}=n!/2^{n-1}$, proving \eqref{eq:bell_m_asymptotic}.
\end{proof}

\noindent In Table \ref{tab:bell_asymptotic_m} we see how the $m$-growth of $B_n^{(m)}$ is "explained" by the leading term $(n!/2^{n-1})m^{n-1}$ when $n=3$.

\begin{table}[htbp]
\begin{center}
\begin{tabular}{c||c|c|c}
$m$&$10^2$&$10^5$&$10^8$\\
\hline
&&&\\[-0.3cm]
$B_3^{(m)}$&15251&15000250001&15000000250000001\\
\hline
&&&\\[-0.2cm]
$(3!/2^{3-1})m^{3-1}$&15000&15000000000&15000000000000000

\end{tabular}
\end{center}
\caption{$B_3^{(m)}$ and $(3!/2^{3-1})m^{3-1}$ are compared. The computations are done with the aid of Maple 2016.}
\label{tab:bell_asymptotic_m}
\end{table}

\end{document}